\newcounter{savesection}
\newcounter{apdxsection}
\renewcommand\appendix{\par
	\setcounter{savesection}{\value{section}}%
	\setcounter{section}{\value{apdxsection}}%
	\setcounter{subsection}{0}
	\gdef\thesection{Appendix \@Alph\c@section}}
\newcommand\unappendix{\par
	\setcounter{apdxsection}{\value{section}}%
	\setcounter{section}{\value{savesection}}%
	\setcounter{subsection}{0}%
	\gdef\thesection{\@arabic\c@section}}
\newtheorem*{theorem*}{Theorem}
\newtheorem{theorem}{Theorem}[section]
\newtheorem{corollary}{Corollary}[theorem]
\newtheorem{lemma}[theorem]{Lemma}
\newtheorem{remark}[theorem]{Remark}
\newtheorem*{remark*}{Remark}
\newtheorem*{lemma*}{Lemma}
\newcommand{\norm}[1]{\left\lVert#1\right\rVert}
\begin{document}
\title[Improved F. D. Results for the Caputo Time-Fractional Diff. Equation]{Improved Finite Difference Results for the Caputo Time-Fractional Diffusion Equation}
\author[Wesley Davis]{W. Davis}
\address{Department of Mathematics and Statistics, Old Dominion University, Norfolk, VA.}
\email{wdavi002@odu.edu}
\author[Richard Noren]{R. Noren}
\address{Department of Mathematics and Statistics, Old Dominion University, Norfolk, VA.}
\email{rnoren@odu.edu}
\author[Ke Shi]{K. Shi}
\address{Department of Mathematics and Statistics, Old Dominion University, Norfolk, VA.}
\email{kshi@odu.edu}
%Improved Finite Difference Results for the Caputo Time-Fractional Diffusion Equation
%Wesley Davis, Richard Noren, and Ke Shi\\
%Department of Mathematics and Statistics, Old Dominion University

\begin{abstract}
We begin with a treatment of the Caputo time-fractional diffusion equation, by using the Laplace transform, to obtain a Volterra intego-differential equation where we may examine the weakly singular nature of this convolution kernel.\iffalse The order of fractional derivative, $\alpha$, is tied to finite difference methods and is of great interest.\fi We examine this new equation and utilize a numerical scheme that is derived in parallel to the L1-method for the time variable and a usual fourth order approximation in the spatial variable. The main method derived in this paper has a rate of convergence of $O(k^{2}+h^4)$ for $u(x,t) \in C^6(\Omega)\times C^2[0,T]$, which improves previous estimates by a factor of $k^{\alpha}$. We also present a novel alternative method for a first order approximation in time, which allows us to relax our regularity assumption to $u(x,t) \in C^6(\Omega)\times C^1[0,T]$, while exhibiting order of convergence slightly less than $O(k^{1+\alpha})$ in time. This allows for a much wider class of functions to be analyzed which was previously not possible under the L1-method. We present numerical examples demonstrating these results and discuss future improvements and implications by using these techniques.
\end{abstract}
\maketitle
\section{Introduction}
Fractional differential equations have been of great interest to various fields in physics, engineering, and mathematics over the past several decades, as seen in [10,11] and many others. Many applications of fractional diffusion equations are studied due to their physical applications, we refer to [10-16] for a small survey of relevant and related works. In their 2014 article \cite{Zhang2014}, Zhang et al. established a numerical scheme for the one-dimensional time-fractional order diffusion equation with initial and boundary conditions
\begin{equation} \label{eq:fraceq}
\mathcal{D}_t^{\alpha} u({\bf x}, t) = \dfrac{\partial^2}{\partial x^2} u({\bf x}, t) + f({\bf x}, t),\ {\bf x}\in \Omega, \ t \in [0, T], 
\end{equation}
\begin{equation*}\label{ibc}
u(x, 0) = \phi (x),\ x\in [0, 1] \text{ and } u(0, t) = u(1, t) = 0,
\end{equation*}
with $\alpha \in (0,1)$ order Caputo fractional time derivative defined by
\begin{equation*}\label{fracder}
\mathcal{D}_t^{\alpha} u({\bf x}, t) = \frac{1}{\Gamma(1-\alpha)} \int_0^t \frac{\partial u({{\bf x}}, s)}{\partial s} (t-s)^{-\alpha}\,ds ,
\end{equation*}
where $\Gamma(x) = \int_0^{\infty} e^{-t} t^{x-1} \,dt$. Various authors have placed various hypotheses on $\phi$ and $f$ in their analysis, see [1,2,15,16]. This problem was solved numerically in \cite{Zhang2014} on the domain $[0,1]\times [0,T]$ with numerical accuracy of order $O(k^{2-\alpha}+h^4)$ by application of a 4th order spatial and a 2nd order time scheme, where k denotes the time mesh size and h denotes the space mesh size, with a constant that depends on $T^{\alpha}$. The 2nd order time scheme is the so-called L1-method, which has been studied extensively in previous works, see \cite{Zhang2014} for further discussion.  In section 2 we will transform \eqref{eq:fraceq} into its equivalent form
\begin{equation}\label{eq:VE}
u(x, t) = \phi(x) + \bigg(a_{1-\alpha} * \bigg(\dfrac{\partial^2 u}{\partial x^2}+f\bigg)\bigg)(x, t),
\end{equation}
where $\displaystyle{a_{1-\alpha}(t) = \frac{t^{\alpha-1}}{\Gamma(\alpha)}}$, by application of the Laplace transform and $*$ denotes convolution as defined in Section 2. We will use the same 4th order discrete space operator as in \cite{Zhang2014}, and we construct two time discretizations for functions $g(t) \in C^1[0,T]$ and for $g(t) \in C^2[0,T]$. {{Thanks to the use of Laplace transform, we are able to relax the regularity assumption to $g(t) \in C^1[0,T]$, under such weaker regularity setting, our analysis shows that the order of convergence is $\mathcal{O}(k + h^4)$ but numerical experences show a better convergence rate as $\mathcal{O}(k^{1 + \alpha} + h^4)$. In addition,}} with the same regularity assumptions as in \cite{Zhang2014} ($g(t) \in C^2[0,T]$), {{we can modify the scheme such that it provides an optimal convergence rate as}} $O(k^2+h^4)$. 
\\
\indent Existence, uniqueness, and monotonicity results were established in \cite{Friedman1969} by A. Friedman for the solution of a generalization of equation \eqref{eq:VE}, see Corollary 1 of [2, p.143]. 
Applications are referenced as well in [2, p.146-147]. More recently, M. Stynes et al were able to obtain existence and uniqueness for the solution of a generalization of equation \eqref{eq:VE} in \cite{Stynes2017}, see Theorem 2.1 of \cite{Stynes2017} for further discussion.
\\
\indent The remainder of the paper is organized as follows. Section 2 presents the numerical preliminaries and presents the existence and uniqueness of a solution to this newly transformed equation. Section 3 defines the numerical schemes and establishes the necessary lemmas for our a priori error estimates. Section 4 contains the statements of our main theorems presented in this paper guaranteeing the convergence and stability of our method. Section 5 presents some numerical examples illustrating our results, where we observe order $O(k^2)$ in time convergence for the $C^2[0,T]$ scheme, and slightly less than order $O(k^{1+\alpha})$ in time convergence for the $C^1[0,T]$ scheme.  Finally, we conclude our findings in Section 6. 

\section{Preliminaries}

We will begin by showing that \eqref{eq:fraceq} and \eqref{eq:VE} are equivalent by application of the Laplace transform under the hypotheses of Theorem A from \cite{Stynes2017}, which we state below.  

Let $\{(\lambda_i,\psi_i): i=1,2,... \}$ be the eigenvalues and eigenfunctions for the Sturm-Liouville two-point boundary value problem
\begin{equation*}
\mathcal{L}\psi_i = -p\dfrac{\partial^2 \psi_i }{\partial x^2} +c\psi_i = \lambda_i \psi_i \ \text{ on } (0,1), \ \ \ \psi_i(0)=\psi_i(1)=0,
\end{equation*}
where the eigenfunctions are normalized by requiring  $\norm{\psi_i}_2=1$ for all i. Define the fractional power $\mathcal{L}^{\gamma}$ of the operator $\mathcal{L}$ for each $\gamma \in \mathbb{R}$ with corresponding domain
\begin{equation*}
D(\mathcal{L}^{\gamma}) = \left\lbrace g\in H^2_0(0,1): \sum_{i=1}^{\infty}\lambda_i^{2\gamma} \vert (g,\psi_i) \vert <\infty  \right\rbrace \subset L^2(0,1).
\end{equation*}
Further, we will use the Sobolev space norm 
\begin{equation*}
\norm{g}_{\mathcal{L}^{\gamma}}= \left(\sum_{i=1}^{\infty}\lambda_i^{2\gamma} \vert (g,\psi_i) \vert \right)^{1/2}, \ \mbox{for all } g\in  D(\mathcal{L}^{\gamma}).
\end{equation*}
\begin{theorem*}[\bf{A [3, p.1061]}]
Let $\phi \in D(\mathcal{L}^{5/2})$, $f(\boldsymbol{\cdot},t) \in D(\mathcal{L}^{5/2})$, $f_{t}(\boldsymbol{\cdot},t) \in D(\mathcal{L}^{5/2})$, and $f_{tt}(\boldsymbol{\cdot},t) \in D(\mathcal{L}^{5/2})$ for each $t \in (0,T]$ with 
\begin{equation*}
\norm{f(\boldsymbol{\cdot},t)}_{\mathcal{L}^{5/2}} + \norm{f_{t}(\boldsymbol{\cdot},t)}_{\mathcal{L}^{1/2}}+ t^{\rho}\norm{f_{tt}(\boldsymbol{\cdot},t)}_{\mathcal{L}^{1/2}} \leq C_{1}
\end{equation*}
for all $t \in (0,T]$ and some constant $\rho < 1$ where $C_{1}$ is a constant independent of t. Then, \eqref{eq:fraceq} has a unique solution u that satisfies the initial and boundary conditions pointwise, and there exists a constant C such that
\begin{align}
\bigg|\dfrac{d^{k}u}{dx^{k}}\bigg| &\leq C  \text{   for k=0,1,2,3,4}\\
\bigg|\dfrac{d^{l}u}{dt^{l}}\bigg| &\leq C(1+t^{\alpha-l})   \text{   for l=0,1,2}.
\end{align} 
\end{theorem*}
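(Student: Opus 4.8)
The plan is to prove this by separation of variables, reducing the fractional PDE to a countable family of scalar fractional-order ODEs whose solutions are given explicitly by Mittag-Leffler functions, and then to extract the stated bounds from sharp estimates on those functions and their $t$-derivatives. First I would expand each of $u$, $\phi$, and $f$ in the orthonormal eigenbasis $\{\psi_i\}$ of $\mathcal{L}$, writing $u(x,t)=\sum_i u_i(t)\psi_i(x)$, $\phi=\sum_i\phi_i\psi_i$, and $f(x,t)=\sum_i f_i(t)\psi_i(x)$ with $\phi_i=(\phi,\psi_i)$ and $f_i(t)=(f(\cdot,t),\psi_i)$. Since $\mathcal{L}\psi_i=\lambda_i\psi_i$ and the spatial operator in \eqref{eq:fraceq} is (in the normalization $p=1$, $c=0$) just $-\mathcal{L}$, projecting \eqref{eq:fraceq} onto $\psi_i$ decouples it into
\begin{equation*}
\mathcal{D}_t^\alpha u_i(t) = -\lambda_i u_i(t) + f_i(t), \qquad u_i(0)=\phi_i .
\end{equation*}

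Next I would solve each scalar problem in closed form. Variation of parameters for the Caputo fractional ODE gives
\begin{equation*}
u_i(t) = \phi_i\, E_{\alpha,1}(-\lambda_i t^\alpha) + \int_0^t (t-s)^{\alpha-1} E_{\alpha,\alpha}\!\left(-\lambda_i (t-s)^\alpha\right) f_i(s)\,ds,
\end{equation*}
where $E_{\alpha,\beta}$ is the two-parameter Mittag-Leffler function. The entire argument then rests on uniform-in-$\lambda$ estimates for $E_{\alpha,\beta}(-\lambda t^\alpha)$ and its derivatives: the decay bound $|E_{\alpha,\beta}(-\lambda t^\alpha)|\le C/(1+\lambda t^\alpha)$ for $\lambda\ge 0$, and the differentiation identity $\frac{d}{dt}E_{\alpha,1}(-\lambda t^\alpha)=-\lambda t^{\alpha-1}E_{\alpha,\alpha}(-\lambda t^\alpha)$, which already exhibits the $t^{\alpha-1}$ singularity and, iterated, produces the $t^{\alpha-l}$ behavior claimed for the time derivatives.

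For the spatial bounds I would differentiate the eigenfunction series $k$ times in $x$ and control $\sum_i|u_i(t)|\,|\psi_i^{(k)}(x)|$. Because $\psi_i^{(k)}$ grows like $\lambda_i^{k/2}$ while the hypotheses place $\phi$ and $f(\cdot,t)$ in $D(\mathcal{L}^{5/2})$ (which corresponds to $H^5$), the solution inherits enough regularity that $d^4u/dx^4$ lies in $H^1\hookrightarrow C^0$ in one dimension; combined with $|E_{\alpha,1}|\le C$ this yields $|d^k u/dx^k|\le C$ for $k\le 4$. For the temporal bounds I would differentiate the representation $l$ times in $t$, applying the Mittag-Leffler derivative identities to the initial-data term and differentiating under the convolution for the source term, where the assumptions on $f$, $f_t$, and the weighted bound $t^\rho\norm{f_{tt}}_{\mathcal{L}^{1/2}}$ with $\rho<1$ are exactly what is needed to control the second $t$-derivative of the convolution as $t\to 0^+$.

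The hard part will be making the series summation rigorous while retaining the sharp singular factor $t^{\alpha-l}$: one must balance the powers of $\lambda_i$ produced by differentiating $E_{\alpha,1}(-\lambda_i t^\alpha)$ in $t$ against the $\lambda_i$-weighted summability built into the $D(\mathcal{L}^{5/2})$ hypotheses together with the $1/(1+\lambda_i t^\alpha)$ decay of the Mittag-Leffler factor, so that the resulting bound is uniform in $x$ and captures the correct singularity across $t\to 0^+$. I expect the source-term contribution to be the most delicate, since differentiating the convolution twice in $t$ forces one either to integrate by parts (moving derivatives onto $f$, which is why $f_t$ and $f_{tt}$ enter the hypotheses) or to isolate the endpoint singularity at $s=t$; arranging the constant to depend only on $T^\alpha$ and the data norms, without hidden blow-up as $\alpha\to 0$ or $\alpha\to 1$, is the bookkeeping that the weighted assumption on $f_{tt}$ is designed to absorb.
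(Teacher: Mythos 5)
This statement is quoted verbatim from reference [3] (Stynes, O'Riordan, and Gracia) and the paper offers no proof of it at all --- it is imported as ``Theorem A'' and used as a black box, so there is no in-paper argument to compare yours against. That said, your outline is essentially the strategy that [3] itself uses (following Sakamoto and Yamamoto): eigenfunction expansion in the $\psi_i$, reduction to the scalar Caputo problems $\mathcal{D}_t^\alpha u_i = -\lambda_i u_i + f_i$, the Mittag-Leffler representation $u_i(t)=\phi_i E_{\alpha,1}(-\lambda_i t^\alpha)+\int_0^t (t-s)^{\alpha-1}E_{\alpha,\alpha}(-\lambda_i(t-s)^\alpha)f_i(s)\,ds$, the uniform decay bound $|E_{\alpha,\beta}(-x)|\le C/(1+x)$, and the trade-off between the powers of $\lambda_i$ generated by differentiation and the $\lambda_i$-weighted summability supplied by the $D(\mathcal{L}^{5/2})$ hypotheses (Cauchy--Schwarz against $\sum_i\lambda_i^{-1}<\infty$ is exactly how the pointwise spatial bounds for $k\le 4$ come out). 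Two caveats: your write-up is a plan rather than a proof --- the decisive steps (the Mittag-Leffler estimates, the termwise differentiation of the series, and the integration by parts in the source term that converts the hypotheses on $f_t$ and $t^{\rho}f_{tt}$ into the $t^{\alpha-2}$ bound) are described but not executed; and you do not address uniqueness, which in [3] requires a separate argument that any solution with the stated regularity must have the given eigenfunction coefficients. Neither is a wrong turn, but both would have to be filled in before this counts as a proof rather than a correct roadmap to the one in [3].
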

%%%%%%%%%%%%%%%%%%%%%%%%%%%%%%%%%%%%%%%%%%%%%%%%
\begin{lemma}
Assume the hypotheses of Theorem A. 
Then the function $u=u(x,t)$ satisfies  \eqref{eq:fraceq} if and only if it satisfies \eqref{eq:VE}.
\end{lemma}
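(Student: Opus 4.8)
The plan is to treat the Caputo derivative as a convolution and pass to the Laplace transform in time, where the fractional equation becomes algebraic. First I would record the two kernels. Writing $a_\alpha(t)=t^{-\alpha}/\Gamma(1-\alpha)$, the Caputo derivative is exactly $\mathcal{D}_t^\alpha u = (a_\alpha * u_t)(x,t)$, while the integral equation \eqref{eq:VE} uses the conjugate kernel $a_{1-\alpha}(t)=t^{\alpha-1}/\Gamma(\alpha)$. The single identity that drives everything is that these kernels are convolution inverses of the constant function, $(a_\alpha * a_{1-\alpha})(t)=1$, which I would verify by the Beta integral $\tfrac{1}{\Gamma(\alpha)\Gamma(1-\alpha)}\int_0^t(t-s)^{-\alpha}s^{\alpha-1}\,ds = B(\alpha,1-\alpha)/(\Gamma(\alpha)\Gamma(1-\alpha))=1$, or equivalently on the transform side by $s^{\alpha-1}\cdot s^{-\alpha}=s^{-1}$.

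Next I would compute the relevant transforms: $\mathcal{L}\{a_\alpha\}=s^{\alpha-1}$, $\mathcal{L}\{a_{1-\alpha}\}=s^{-\alpha}$, and, using $\mathcal{L}\{u_t\}=s\hat u - \phi$ (from $u(x,0)=\phi$) together with the convolution theorem, $\mathcal{L}\{\mathcal{D}_t^\alpha u\}=s^{\alpha}\hat u - s^{\alpha-1}\phi$. For the forward direction, transforming \eqref{eq:fraceq} in $t$ gives $s^\alpha \hat u - s^{\alpha-1}\phi = \widehat{u_{xx}}+\hat f$; dividing by $s^\alpha$ yields $\hat u - \phi/s = s^{-\alpha}(\widehat{u_{xx}}+\hat f)$. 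Recognizing $\phi/s=\mathcal{L}\{\phi\}$ and $s^{-\alpha}=\mathcal{L}\{a_{1-\alpha}\}$ and applying the convolution theorem once more, the right side is $\mathcal{L}\{a_{1-\alpha}*(u_{xx}+f)\}$, so inverting gives \eqref{eq:VE}. The converse is the same chain run backwards: transform \eqref{eq:VE}, multiply by $s^\alpha$, and recognize the left side as $\mathcal{L}\{\mathcal{D}_t^\alpha u\}$, where one uses that \eqref{eq:VE} forces $u(x,0)=\phi(x)$ since the convolution vanishes at $t=0$.

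I expect the main obstacle to be the analytic justification of these formal steps rather than the algebra. I would need to check (i) that $\hat u$, $\widehat{u_{xx}}$, $\hat f$ and $\mathcal{L}\{\mathcal{D}_t^\alpha u\}$ all converge, which follows from the bounds of Theorem A: $|u|,|u_{xx}|\le C$ (and $f$ bounded, by $\norm{f(\boldsymbol{\cdot},t)}_{\mathcal{L}^{5/2}}\le C_1$ and Sobolev embedding) give exponential order zero, while $|u_t|\le C(1+t^{\alpha-1})$ is integrable at the origin since $\alpha-1>-1$, so the transform of $u_t$, hence of the convolution defining $\mathcal{D}_t^\alpha u$, exists; (ii) that $\partial_x^2$ may be pulled outside the time-integral, i.e.\ $\widehat{u_{xx}}=\partial_x^2\hat u$, justified by dominated convergence using the uniform bound on $u_{xx}$; and (iii) that the Laplace transform is injective on the relevant class, so equality of transforms implies pointwise equality, for which I would invoke Lerch's theorem, applicable because the functions are continuous in $t$ on $(0,T]$ with the controlled singular behavior above. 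The delicate point throughout is the weak singularity of $u_t$ at $t=0$; the saving feature is precisely that $a_{1-\alpha}$ is locally integrable, so $a_\alpha * a_{1-\alpha}=1$ holds as an ordinary integral and every convolution above is classically defined.
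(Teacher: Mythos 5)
Your proposal is correct and follows essentially the same route as the paper: apply the Laplace transform in time, use the convolution theorem together with $\mathcal{L}(a_\alpha)(z)=z^{\alpha-1}$ and $\mathcal{L}(h')(z)=z\mathcal{L}(h)(z)-h(0)$, divide by $z^{\alpha}$, and invert, with reversibility giving the converse. Your discussion of the analytic justification (integrability of $u_t$ near $t=0$ via Theorem A, injectivity of the transform) is more explicit than the paper, which simply asserts that the formal manipulations are valid by Theorem A.
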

\begin{proof}
We use the convolution theorem (see Chapter 6, Section 1.3 of [4, p.135] )
\begin{equation*}
\mathscr{L}(a*b) = \mathscr{L}(a)\mathscr{L}(b) \text{ if } (a*b)(t)=\int_0^t a(t-s)b(s)\,ds
\end{equation*}
and the facts 
\begin{equation*}
\mathscr{L}(a_\alpha)(z) = z^{\alpha-1} \text{ and } \mathscr{L}(h')(z)=z\mathscr{L}(h)(z)-h(0)
\end{equation*}
to obtain
\begin{equation*}
\mathscr{L}(\mathcal{D}_t^{\alpha} u(x, t)) = (z\mathscr{L}(u(x, \cdot))(z)-\phi(x))z^{\alpha-1}
\end{equation*}
Applying the Laplace transform to equation \eqref{eq:fraceq} we obtain after some algebra,
\begin{align*}
(z\mathscr{L}(u(x, \cdot))(z)-&\phi(x))z^{\alpha-1} = \bigg[\mathscr{L}\bigg(\dfrac{\partial^2 u}{\partial x^2}(x,\cdot)\bigg) +\mathscr{L}(f(x,\cdot))(z)\bigg]\\
\mathscr{L}(u(x, \cdot ))(z) &= z^{-1}\phi(x) + z^{-\alpha}\bigg[\mathscr{L}\bigg(\dfrac{\partial^2 u}{\partial x^2}(x,\cdot)\bigg) + \mathscr{L}(f(x,\cdot))(z)\bigg].
\end{align*}
By inverting the Laplace transform, we get the equivalent Volterra integral equation
\begin{equation*}
u(x, t) = \phi(x) + a_{1-\alpha} * \left(\dfrac{\partial^2 u}{\partial x^2}+f\right)(x, t).
\end{equation*}
Since the steps are reversible and our formal manipulations are valid by Theorem A, then the result follows.
\end{proof}
\begin{remark}
The manipulations in the prior lemma use the assumptions from Theorem A in order to guarantee our a priori estimates that are derived in sections 3 and 4. We note that a similar existence and uniqueness theorem can be derived under the assumptions detailed in \cite{Friedman1969}, but the a regularity of the solution that results is insufficient for our finite difference methods.
\end{remark}
\iffalse
\begin{remark*}
Let $\beta(t) \in C(0,\infty)\cap L_1(0,\infty)$ and let $L(u) = L(u, u_x, u_{xx}, f)(x,t)$ be a Sturm-Liouville operator that is densely defined on a Hilbert Space H such that $D(L)\subset H(\Omega)$. Then, for the generalization of \eqref{eq:fraceq},
$$\int_0^t \frac{\partial u(x, s)}{\partial s} \beta(t-s)\,ds  = L(u),$$
we may obtain the equivalent Volterra integral equation
$$u(x,t) = \phi(x) + L(u)*\mathcal{L}^{-1}\left(\dfrac{1}{z\mathcal{L}(\beta(t);z)};t\right).$$
\end{remark*}
\begin{proof}
We may apply the Laplace transform to the above to obtain 
\begin{align*}
(z\mathcal{L}(u(x, \cdot))(z)&-\phi(x))\mathcal{L}(\beta(t);z)= \mathcal{L}\bigg(L(u)(x,t);z\bigg)\\
\mathcal{L}(u(x, \cdot))(z)&= z^{-1}\phi(x) + z^{-1} (\mathcal{L}(\beta(t);z))^{-1}\mathcal{L}\bigg(L(u)(x,t);z\bigg).
\end{align*}
Thus, by inverting the Laplace transform, we arrive at the equivalent equation
$$u(x,t) = \phi(x) + L(u)*\mathcal{L}^{-1}\left(\dfrac{1}{z\mathcal{L}(\beta(t);z)};t\right).$$
\end{proof}
\fi
With the equivalence established between \eqref{eq:fraceq} and \eqref{eq:VE}, we next provide the finite difference schemes that are used and the resulting a priori error estimates in the following sections. The existence and uniqueness of a solution to \eqref{eq:VE} is presented in Appendix A. 
 We now examine the consistency, stability, and convergence of multiple numerical schemes for \eqref{eq:VE} based on the regularity of the solution in the time-variable.
%%%%%%%%%%%%%%%%%%%%%%%%%%%%%%%%%%%%%%%%%%
\section{Fully Discretized Numerical Schemes}
In [1, 15, 16], a fully discrete scheme was derived for the L1-method in the time variable and analyzed as such. By utilizing the Laplace transform, we are able to derive an equation with a different integral kernel than the fractional derivative operator as defined before. Therefore, we will derive two convergent numerical schemes for this newly transformed equation for both a first and second-order approximation to \eqref{eq:VE} in time. The schemes are defined by the degree of regularity that will be assumed, therefore we will construct a first-order accurate scheme for functions that are $C^1[0,T]$ in time and a second-order accurate scheme for functions that are $C^2[0,T]$ in time. From there, we will utilize a spatial operator that was defined in \cite{Zhang2014} which is fourth-order accurate in the spatial variable and hence we will arrive at the fully discrete equations. \\
%%%%%%%%%%%%%%%%%%%%%%%%%%%%%%%%%%%%%%%%%%
\indent We will use the notations and state key results from \cite{Zhang2014} that extend to our work.
Divide the time interval [0,T] into N intervals where $0 = t_0 < t_1 < ... < t_N = T$. Denote the time steps as
\begin{equation*}
\tau_n = t_n - t_{n-1}, \ \ 1\leq  n \leq N,
\end{equation*}
and the mesh of the partition
\begin{equation*}
\tau_{max} = \max_{1\leq j \leq N}\tau_j.
\end{equation*}
We will derive the numerical results for any temporal mesh provided, see Theorems 3.1, 3.2, 4.3, and 4.6 for these results. Having established the unique solution to \eqref{eq:VE} in section 2.1, we shall denote the grid function by
\begin{center}
$v = \{v_{i} : \  0\leq i\leq M \}$, where $M>0$, $h=\dfrac{1}{M}$, and $x_{i} = ih$,\\
\end{center}
and the grid operator
\begin{align} \label{Gridop}
\mathcal{H}_{h}v_{i} = \begin{cases} 
			 \dfrac{1}{12}(v_{i+1}+10v_{i}+v_{i-1}), & 1\leq i \leq M-1, \\ 
			 v_{i},  & i=0 $ or $ i=M.
			\end{cases}
\end{align}
By applying $\mathcal{H}_{h}$ to equation \eqref{eq:VE} we see that when $i=0 $ or $i=M$,
\begin{align*}
\mathcal{H}_{h}u(x_{i},t_{n}) &= u(x_{i},0) + \dfrac{1}{\Gamma(\alpha)}\int_0^{t_{n}} (t_{n}-s)^{\alpha -1}\left(\dfrac{\partial^2 u}{\partial x^2}(x_{i},s) + f(x_{i},s)\right)\,ds,
\end{align*}
and when $1\leq i \leq M-1$,
\begin{align} \label{DifferenceEQ}
\mathcal{H}_{h}&u(x_{i},t_{n}) = \dfrac{1}{12}u(x_{i-1},t_{n})+\dfrac{10}{12}u(x_{i},t_{n})+\dfrac{1}{12}u(x_{i+1},t_{n}) \nonumber \\ 
&=  \mathcal{H}_{h}\left[u(x_{i},0) + \dfrac{1}{\Gamma(\alpha)}\int_0^{t_{n}} (t_{n}-s)^{\alpha -1}\left(\dfrac{\partial^2 u}{\partial x^2}(x_{i},s) + f(x_{i},s)\right)\,ds\right] \nonumber \\
&= \mathcal{H}_{h}u(x_{i},0) + \dfrac{1}{\Gamma(\alpha)}\int_0^{t_{n}} (t_{n}-s)^{\alpha -1}\left(\mathcal{H}_{h}\frac{\partial^2 u}{\partial x^2}(x_{i},s) +  \mathcal{H}_{h}f(x_{i},s)\right)\,ds.
\end{align}
\noindent
\iffalse
We will apply a trapezoidal rule to discretize the integral
\begin{align*}
u(x_i,t_n) &= \phi(x_i)+\int_0^{t_n} a_{1-\alpha}(t_n-s)\left(u_{xx}(x_i,s)+f(x_i,s)\right)\,ds\\
&\approx \phi(x_i)+ \sum_{k=0}^{n} \int_{t_{k-1}}^{t_k} \frac{a_{1-\alpha}(t_n-s)}{2}\left[\left(\frac{\partial^2 u}{\partial x^2}(x_{i},t_k)+\frac{\partial^2 u}{\partial x^2}(x_{i},t_{k-1})\right)\right]ds\\
& \ \ \ \ \ \ \ \ \ \ \ + f_{1-\alpha}(x_i,t_n)\\
&= \phi(x_i) + \sum_{k=0}^{n}\left[\dfrac{(t_{n}-t_{k-1})^{\alpha}-(t_n-t_{k})^{\alpha}}{2\Gamma(\alpha +1)} \right]\bigg[\frac{\partial^2 u}{\partial x^2}(x_{i},t_k)\\
&\ \ \ \ \ \ \ \ \ \ \ +\frac{\partial^2 u}{\partial x^2}(x_{i},t_{k-1}) \bigg] + f_{1-\alpha}(x_i,t_n),
\end{align*}
where $\displaystyle{f_{1-\alpha}(x_i,t_n) = \int_0^{t_n} a_{1-\alpha}(t_n-s)f(x_i,s)\,ds = (a_{1-\alpha}*f)(x_i,t_n)}$. We will need Lemma 4.1 of \cite{Zhang2014} and a parallel of Lemma 2.1 of \cite{Zhang2014} for our proofs of Theorems 3.1 and 3.3.
\fi
We present the discretization in the space variable for \eqref{eq:VE}, which was used in \cite{Zhang2014}.
\begin{lemma*}[\bf{4.1 of \cite{Zhang2014}}]
Let $g(x)$ and $\xi(s)$ be functions such that $g(x) \in C^{6}[x_{i-1}, x_{i+1}]$ and $\xi(s) = 5(1-s)^{3}-3(1-s)^{5}$, then  \begin{align*}
\dfrac{g''(x_{i+1})+10g''(x_{i})+g''(x_{i-1})}{12} &= \dfrac{g(x_{i+1})-2g(x_{i})+g(x_{i-1})}{h^{2}}\\
 &+ \dfrac{h^{4}}{360}\int_0^1 [g^{(6)}(x_{i}-sh)+g^{(6)}(x_{i}+sh)]\xi(s)\, ds.
\end{align*}
\end{lemma*}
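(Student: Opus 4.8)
The plan is to read this as the standard fourth-order compact (Numerov-type) identity for the second derivative, equipped with an explicit integral remainder, and to prove it by Taylor's theorem with integral remainder (equivalently, by Peano's kernel theorem). Introduce the linear functional
$$L[g] = \frac{g''(x_{i+1}) + 10 g''(x_i) + g''(x_{i-1})}{12} - \frac{g(x_{i+1}) - 2g(x_i) + g(x_{i-1})}{h^2},$$
so that the assertion to be proved is $L[g] = \frac{h^4}{360}\int_0^1 \xi(s)\,[g^{(6)}(x_i - sh) + g^{(6)}(x_i + sh)]\,ds$, with $\xi$ the weight introduced in the hypotheses; in this formulation the factor $\xi(s)$ is precisely the Peano kernel that the derivation produces, which is why it is recorded in the statement.

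First I would show that $L$ annihilates every polynomial of degree at most $5$. Because the stencil is centered and symmetric about $x_i$, all odd powers cancel automatically, so it suffices to verify the constant, quadratic, and quartic monomials by direct substitution using $x_{i\pm1} = x_i \pm h$; in each case one finds $L \equiv 0$. This algebraic exactness is the backbone of the argument: it guarantees an $O(h^4)$ remainder and forces that remainder to act on $g^{(6)}$ alone.

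Next I would expand the four nodal quantities by Taylor's theorem with integral remainder about $x_i$, using that $g\in C^6[x_{i-1},x_{i+1}]$: expand $g(x_i \pm h)$ through the fifth-order term with a quintic integral remainder, and $g''(x_i \pm h)$ through the third-order term with a cubic integral remainder, so that all four remainders are integrals against $g^{(6)}$. Substituting into $L[g]$, the polynomial contributions cancel by the exactness established above, leaving a sum of four integrals. After the substitutions $t = x_i \pm sh$, the evenness of the stencil collapses these into a single symmetric pair $\int_0^1 K(s)\,[g^{(6)}(x_i - sh) + g^{(6)}(x_i + sh)]\,ds$.

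The crux, and the step I expect to be most delicate, is the explicit evaluation of the kernel $K$. One must combine the quintic remainder coming from the divided second difference with the two cubic remainders coming from the $g''$ values, tracking carefully the competing factors $1/5!$ and $1/(12\cdot 3!)$ together with the overlapping integration ranges, and then show that the piecewise pieces telescope into the single polynomial $K(s) = \frac{h^4}{360}\,\xi(s)$ with $\xi(s) = 5(1-s)^3 - 3(1-s)^5$. As a consistency check one can take $g^{(6)}$ constant: then $\int_0^1 \xi = 3/4$, the right-hand side reduces to $\frac{h^4}{240}g^{(6)}$, and a direct Taylor comparison of the two sides of the identity yields the same leading coefficient $\frac{1}{240}$, confirming both the kernel $\xi$ and the normalization $1/360$.
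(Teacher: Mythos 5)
Your proposal is correct, but note that the paper itself offers no proof of this statement: it is quoted verbatim as Lemma 4.1 of reference [1], so there is nothing internal to compare against. Two remarks on your argument. First, you have correctly diagnosed that the weight $\xi(s)=5(1-s)^3-3(1-s)^5$ must appear inside the integral; as printed in the paper the integrand omits $\xi(s)$ (otherwise $\xi$ is defined in the hypotheses and never used), and your constant-$g^{(6)}$ check, giving $\int_0^1\xi = 3/4$ and a leading remainder coefficient of $1/240$ on both sides, confirms the corrected form. Second, the step you flag as delicate is in fact immediate and involves no overlapping ranges or telescoping: writing the degree-$5$ Taylor remainder of $g(x_i\pm h)$ as $\frac{h^6}{5!}\int_0^1(1-s)^5 g^{(6)}(x_i\pm sh)\,ds$ and the degree-$3$ remainder of $g''(x_i\pm h)$ as $\frac{h^4}{3!}\int_0^1(1-s)^3 g^{(6)}(x_i\pm sh)\,ds$, both kernels are already parametrized on $[0,1]$, and the difference of the two sides collects to
\begin{equation*}
\frac{h^4}{72}\int_0^1(1-s)^3\bigl[\cdots\bigr]\,ds-\frac{h^4}{120}\int_0^1(1-s)^5\bigl[\cdots\bigr]\,ds=\frac{h^4}{360}\int_0^1\bigl[5(1-s)^3-3(1-s)^5\bigr]\bigl[\cdots\bigr]\,ds,
\end{equation*}
since $\tfrac{1}{72}=\tfrac{5}{360}$ and $\tfrac{1}{120}=\tfrac{3}{360}$. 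This also makes your separate polynomial-exactness step redundant (the polynomial parts cancel identically in the Taylor expansions), though it is harmless as a sanity check.
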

\noindent
%%%%%%%%%%%%%%%%%%%%%%%%%%%%%%%
\subsection{A $C^{1}[0,T]$ in Time Scheme}
The following is an analogue of Lemma 2.1 of \cite{Zhang2014}.
\begin{theorem}
For $0<\alpha<1$ and for $g(t) \in C^1[0,T]$, it follows that
\begin{equation}
\int_0^{t_n}g(s)(t_n -s)^{\alpha-1}\,ds = \sum_{k=1}^{n} \dfrac{g(t_{k-1})+g(t_k)}{2}\int_{t_{k-1}}^{t_k} (t_n-s)^{\alpha-1}\,ds + R_t^n,
\end{equation}
where
\begin{equation*}
|R_t^n|\leq \left(\tau_n+\tau_{max}\right)\iffalse\tau^{\alpha\+x}_n\fi \frac{T^{\alpha}}{2\alpha} \max_{0\leq t\leq t_n}|g'(t)|, \iffalse\quad x =\log_{\tau}\left(\dfrac{n^{\alpha}}{2\alpha} \right)\fi.
\end{equation*}
\begin{proof}
We begin by writing the integral as
\begin{equation*}
\int_0^{t_n}g(s)(t_n -s)^{\alpha-1}\,ds = \int_0^{t_{n-1}}g(s)(t_n -s)^{\alpha-1}\,ds + \int_{t_{n-1}}^{t_n}g(s)(t_n -s)^{\alpha-1}\,ds.
\end{equation*}
The first integral on the right hand side is rewritten as
\begin{align*}
\int_0^{t_{n-1}}g(s)(t_n -s)^{\alpha-1}\,ds &= \sum_{k=1}^{n-1} \int_{t_{k-1}}^{t_k} g(s)(t_n -s)^{\alpha-1}\,ds\\
&= \sum_{k=1}^{n-1} \int_{t_{k-1}}^{t_k}\left(g(s)-\dfrac{g(t_{k-1})+g(t_k)}{2}\right)(t_n -s)^{\alpha-1}\,ds\\
&+ \int_{t_{k-1}}^{t_k}\left(\dfrac{g(t_{k-1})+g(t_k)}{2}\right)(t_n -s)^{\alpha-1}\,ds\\
&= \sum_{k=1}^{n-1}\dfrac{g(t_{k-1})+g(t_k)}{2}\int_{t_{k-1}}^{t_k}(t_n -s)^{\alpha-1}\,ds + (R_1)^n,
\end{align*}
where
\begin{equation*}
(R_1)^n =  \sum_{k=1}^{n-1} \int_{t_{k-1}}^{t_k}\left(g(s)-\dfrac{g(t_{k-1})+g(t_k)}{2}\right)(t_n -s)^{\alpha-1}\,ds.
\end{equation*}
By utilizing the Taylor expansion of $g(s)$ for $s\in (0, t_{n-1})$,
\begin{align}\label{MidpointError1}
|(R_1)^n| &\leq \max_{0\leq t\leq t_{n-1}}|g'(t)|\sum_{k=1}^{n-1} \int_{t_{k-1}}^{t_k}\left|t_k-s-\dfrac{\tau_k}{2} \right|(t_n -s)^{\alpha-1}\,ds \nonumber \\ \nonumber
&\leq \frac{\tau_{max}}{2}\max_{0\leq t\leq t_{n-1}}|g'(t)|\int_{0}^{t_{n-1}}(t_n -s)^{\alpha-1}\,ds\\\	\nonumber
&= \frac{\tau_{max}}{2}\max_{0\leq t\leq t_{n-1}}|g'(t)|\left(\frac{t_n^{\alpha}}{\alpha}-\frac{\tau_n^{\alpha}}{\alpha}\right)\\
&\leq \frac{\tau_{max}T^{\alpha}}{2\alpha}\iffalse\tau_n^{\alpha+x}\fi\max_{0\leq t\leq t_{n-1}}|g'(t)| \iffalse\quad x=\log_{\tau}\left(\dfrac{n^{\alpha}}{2\alpha} \right)\fi.
\end{align}
In a similar manner, by the Taylor expansion of g(s) for $s\in (t_{n-1},t_n)$, we have
\begin{equation*}
\left|g(s)-\frac{g(t_{n-1})+g(t_n)}{2} \right| \leq \frac{\tau_n}{2}\max_{t_{n-1}\leq t\leq t_n}|g'(t)|, \ \ t_{n-1}< s <t_n.
\end{equation*}
Therefore, the approximation error in the interval $[t_{n-1},t_n]$ satisfies
\begin{align}\label{MidpointError2}
|(R_2)^n| &= \left|\int_{t_{n-1}}^{t_n}\left(g(s)-\dfrac{g(t_{k-1})+g(t_k)}{2}\right)(t_n -s)^{\alpha-1}\,ds\right|\nonumber \\
&\leq \frac{\tau_n^{\iffalse+\alpha\fi}T^{\alpha}}{2\alpha}\max_{t_{n-1}\leq t\leq t_{n}}|g'(t)|.
\end{align}
Finally, since 
$$R_t^n = (R_1)^n + (R_2)^n = \sum_{k=1}^{n} \int_{t_{k-1}}^{t_k}\left(g(s)-\dfrac{g(t_{k-1})+g(t_k)}{2}\right)(t_n -s)^{\alpha-1}\,ds,$$
by combining the error estimates \eqref{MidpointError1} and \eqref{MidpointError2}, \iffalse where $|(R_t)^n| =\\ \left|(R_1)^n+(R_2)^n\right|$ then \fi we have the desired result.
\end{proof}
\end{theorem}
%%%%%%%%%%%%%%%%%%%%%%%%%%%%%%%%%%%%%%%%%%%%%
We remark that under these assumptions, we may obtain a first order accurate scheme for $g(t) \in C^1[0,T]$. The L1-method requires the function $g(t) \in C^2[0,T]$ based on a Taylor series argument, so the condition for the L1-method cannot be relaxed to allow $g(t) \in C^1[0,T]$ due to the nature of the Caputo Fractional Derivative. In section 5, we will see that this scheme exhibits superconvergence for this $C^1[0,T]$ scheme.
\noindent Define
\begin{align}\label{ank}
a^{n}_{k} &= \frac{1}{\Gamma(\alpha)} \int_{t_{k-1}}^{t_{k}}(t_{n}-s)^{\alpha-1}ds, \\
&= \frac{1}{\Gamma(\alpha +1)}\left[ (t_n-t_{k-1})^{\alpha}-(t_n-t_{k})^{\alpha} \right].\nonumber
\end{align}
We define 
\begin{equation}
f_{1-\alpha}(x,t) = \int_0^t \dfrac{(t-s)^{\alpha-1}}{\Gamma(\alpha)}f(x,s)\, ds
\end{equation}
to help succinctly denote the forcing function term in the approximate equation. By applying the $\mathcal{H}_h$ operator, Lemma 4.1 of \cite{Zhang2014}, Lemma 2.2, and the previously stated discretization to \eqref{eq:VE}, we have the fully discretized approximate equation for $u^n_i \approx u(x_i,t_n)$ 
\begin{align}\label{S1}
\mathcal{H}_{h}u_i^n &= \mathcal{H}_{h}\phi(x_i)+\mathcal{H}_h f_{1-\alpha}(x_i,t_n) \nonumber \\ 
&+ \sum_{k=1}^{n}\frac{a_k^n}{2h^2}\left[u^k_{i+1}-2u^k_i+u^k_{i-1}\right]+\frac{a_{k}^n}{2h^2}\left[u^{k-1}_{i+1}-2u^{k-1}_i+u^{k-1}_{i-1}\right],
\end{align}
which is to be solved for $\displaystyle{\{u^n_i\}_{{n=0,1,...,N},\ {i=0,1,...,M}}}$.
%%%%%%%%%%%%%%%%%%%%%%%%%%%%%%%%%%%%%%%%%%%%%%%
\subsection{A $C^2[0,T]$ in Time Scheme}
We begin our findings in this section by establishing a second order method in time, with the restriction of $g(t)\in C^2[0,T]$.
\begin{theorem}\label{C^2thm}
For $0<\alpha<1$ and for $g(t) \in C^2[0,T]$, it follows that
\begin{align}
&\int_0^{t_n}g(s)(t_n -s)^{\alpha-1}\,ds \nonumber\\
&= \sum_{k=1}^{n}\int_{t_{k-1}}^{t_k}\left(\left(1-\dfrac{t_k-s}{\tau_k}\right)g(t_k) + \left(\dfrac{t_k-s}{\tau_k}\right)g(t_{k-1})\right) (t_n-s)^{\alpha-1}\,ds + R_t^n,\label{C2scheme}
\end{align}
where
\begin{equation*}
|R_t^n| \leq \dfrac{\left(\tau_{\max}^2+ \tau_n^2\right)T^{\alpha}}{8\alpha} \max_{0\leq t\leq t_{n-1}}|g''(t)|.
\end{equation*}
\begin{proof}
We begin with the Taylor expansions of $g(s)$ at the points $s=t_k$, where $s\in [t_{k-1},t_k]$, $t_k \in [0,t_n]$ for each $k = 0,1,2,..., N$,
\begin{align*}
g(s) &= g(t_k) + (s-t_k)g'(t_k) + \dfrac{(s-t_k)^2}{2}g''(t_k) + O((s-t_k)^3)\\
g(t_{k-1}) &= g(t_k) - \tau_k g'(t_k) + \dfrac{\tau_k^2}{2}g''(t_k) + O(\tau_k^3).
\end{align*}
Hence, we may combine the above in the following manner:
\begin{align*}
g(s) - &\left(\left(1-\dfrac{t_k-s}{\tau_k}\right)g(t_k) + \left(\dfrac{t_k-s}{\tau_k}\right)g(t_{k-1})\right) \\
&= \left( \dfrac{(s-t_k)^2-\tau_k(t_k-s)}{2}\right)g''(t_k) + O((t_k-s)^3)
\end{align*}
We now rewrite the integral as
\begin{equation*}
\int_0^{t_n}g(s)(t_n -s)^{\alpha-1}\,ds = \int_0^{t_{n-1}}g(s)(t_n -s)^{\alpha-1}\,ds + \int_{t_{n-1}}^{t_n}g(s)(t_n -s)^{\alpha-1}\,ds.
\end{equation*}
The first integral on the right hand side is rewritten as
\begin{align*}
&\int_0^{t_{n-1}}g(s)(t_n -s)^{\alpha-1}\,ds = \sum_{k=1}^{n-1} \int_{t_{k-1}}^{t_k} g(s)(t_n -s)^{\alpha-1}\,ds\\
&= \sum_{k=1}^{n-1} \int_{t_{k-1}}^{t_k}\left(g(s) -\left(\left(1-\dfrac{t_k-s}{\tau_k}\right)g(t_k) + \left(\dfrac{t_k-s}{\tau_k}\right)g(t_{k-1})\right)\right)(t_n -s)^{\alpha-1}\,ds\\
&+ \int_{t_{k-1}}^{t_k}\left(\left(1-\dfrac{t_k-s}{\tau_k}\right)g(t_k) + \left(\dfrac{t_k-s}{\tau_k}\right)g(t_{k-1})\right)(t_n -s)^{\alpha-1}\,ds\\
&= \sum_{k=1}^{n-1}\int_{t_{k-1}}^{t_k}\left(\left(1-\dfrac{t_k-s}{\tau_k}\right)g(t_k) + \left(\dfrac{t_k-s}{\tau_k}\right)g(t_{k-1})\right)(t_n -s)^{\alpha-1}\,ds+ (R_1)^n,
\end{align*}
where then
\begin{equation*}
(R_1)^n =  \sum_{k=1}^{n-1} \int_{t_{k-1}}^{t_k}\left(g(s) - \left(\left(1-\dfrac{t_k-s}{\tau_k}\right)g(t_k) + \left(\dfrac{t_k-s}{\tau_k}\right)g(t_{k-1})\right)\right)(t_n -s)^{\alpha-1}\,ds.
\end{equation*}
We remark that since $s\in [t_{k-1},t_k]$ for each k, then we have $$\left|(s-t_k)^2 - \tau_k (s-t_k)\right|=\left| (s-t_k)(s-t_k-\tau_k) \right|=\left| (s-t_k)(s-t_{k-1}) \right|\leq \frac{\tau_k^2}{4}$$ for each k. By utilizing the Taylor expansion of $g(s)$ for $s\in (0, t_{n-1})$, and by neglecting the higher order terms,
\begin{align}\label{C2Error1}
|(R_1)^n| &\leq \max_{0\leq t\leq t_{n-1}}|g''(t)|\sum_{k=1}^{n-1} \int_{t_{k-1}}^{t_k}\left| \dfrac{(s-t_k)^2-\tau_k(s-t_k)}{2}\right|\left|t_n -s\right|^{\alpha-1}\,ds \nonumber \\ \nonumber
&\leq \dfrac{\tau_{\max}^2}{8} \max_{0\leq t\leq t_{n-1}}|g''(t)|\int_{0}^{t_{n-1}} (t_n-s)^{\alpha-1} \,ds\\\	\nonumber
&\leq \dfrac{\tau_{\max}^2}{8} \max_{0\leq t\leq t_{n-1}}|g''(t)|\left(\frac{t_n^{\alpha}}{\alpha} - \frac{\tau_n^{\alpha}}{\alpha} \right) \\	
&\leq \dfrac{\tau_{\max}^2T^{\alpha}}{8\alpha} \max_{0\leq t\leq t_{n-1}}|g''(t)|.
\end{align}
For a uniform mesh, $\tau_{max} = \tau_n = \tau$, we have the result $|(R_1)^n| \leq C \tau^{2}$.
For the remaining integral term from $[t_{n-1},t_n]$, the same argument is used as for the interval $[0,t_{n-1}]$. Therefore, the approximation error in the interval $[t_{n-1},t_n]$ satisfies
\begin{align}\label{C2Error2}
|(R_2)^n| &= \bigg|\int_{t_{n-1}}^{t_n}\left(g(s) - \left(\left(1-\dfrac{t_k-s}{\tau_k}\right)g(t_k) + \left(\dfrac{t_k-s}{\tau_k}\right)g(t_{k-1})\right)\right)\nonumber \\
&\times (t_n -s)^{\alpha-1}\,ds\bigg|\nonumber \\
&\leq \dfrac{\tau_n^2}{8} \max_{t_{n-1}\leq t\leq t_{n}}|g''(t)| \int_{t_{n-1}}^{t_{n}} (t_n-s)^{\alpha-1} \,ds\nonumber\\
&\leq \dfrac{\tau_n^2}{8} \max_{t_{n-1}\leq t\leq t_{n}}|g''(t)| \left(\frac{\tau_n^a}{\alpha} - 0\right)\nonumber\\
&\leq \dfrac{\tau_n^{2+\alpha}}{8\alpha} \max_{t_{n-1}\leq t\leq t_{n}}|g''(t)|.
\end{align}
Finally, since 
$$R_t^n = (R_1)^n + (R_2)^n = \sum_{k=1}^{n} \int_{t_{k-1}}^{t_k}\left(g(s)-\dfrac{g(t_{k-1})+g(t_k)}{2}\right)(t_n -s)^{\alpha-1}\,ds,$$
by combining the error estimates \eqref{C2Error1} and \eqref{C2Error2}, \iffalse where $|(R_t)^n| =\\ \left|(R_1)^n+(R_2)^n\right|$ then \fi we have the desired result.
\end{proof}
\end{theorem}
We then arrive at a fully discretized equation for $u^n_i \approx u(x_i,t_n)$ by recalling the definition of $a_k^n$ from \eqref{ank} and by setting 
\begin{align*}
b^n_{1,k} &= \dfrac{1}{\Gamma(\alpha)}\int_{t_{k-1}}^{t_k} \left(1-\dfrac{t_k-s}{\tau_k}\right)(t_n-s)^{\alpha-1}\,ds\\ 
b^n_{2,k} &=  \dfrac{1}{\Gamma(\alpha)}\int_{t_{k-1}}^{t_k} \left(\dfrac{t_k-s}{\tau_k}\right)(t_n-s)^{\alpha-1}\,ds
\end{align*}
The resulting fully discretized equation is as follows:
\begin{align}\label{S2}
\mathcal{H}_{h}u_i^n &= \mathcal{H}_{h}\phi(x_i)+\mathcal{H}_h f_{1-\alpha}(x_i,t_n) \nonumber \\ 
&+ \sum_{k=1}^{n}\left(b_{1,k}^n\left[\dfrac{u^k_{i+1}-2u^k_i+u^k_{i-1}}{h^2}\right] +b_{2,k}^n\left[\dfrac{u^{k-1}_{i+1}-2u^{k-1}_i+u^{k-1}_{i-1}}{h^2} \right]\right),
\end{align}
which is to be solved for $\displaystyle{\{u^n_i\}_{{n=0,1,...,N},\ {i=0,1,...,M}}}$.
\section{Error Estimates}
Before we establish stability and convergence of the numerical methods used, we will make use of the definitions in [1, p.202].  Let $$\mathcal{V}_{h} = \{ v=(v_{0},v_{1},..., v_{M})\vert v_{0}=v_{M}=0\}.$$ For any grid functions $v,w\in \mathcal{V}_{h}$, we will define the following:
\begin{align*}
L_{2} \text{ norm} &\norm{v}_{h} = \sqrt{<v,v>_{h}} \\
H^{1} \text{ semi-norm} &\norm{\delta_{x}v}_{h} = \sqrt{h\sum_{i=1}^{M}(\delta_{x}v_{i-1})^{2}}\\
H^{1} \text{ norm} &\norm{v}_{1,h} = \sqrt{\norm{v}_h^2+\norm{\delta_x v}_h^2}
\end{align*}
Where $\norm{\mathcal{H}_{h}v}_{h}$ and $\norm{\delta_{x}^{2}v}_{h}$ are defined in a similar manner. By applying Lemma 4.2 of \cite{Zhang2014}, then 
\begin{equation*}
\norm{v}_{h} \leq \dfrac{1}{\sqrt{6}}\norm{\delta_{x}v}_{h}.
\end{equation*}
Following \cite{Zhang2014}, define
\begin{equation*}
<v,w>_{A} = h\sum_{i=1}^{M}(\delta_{x}v_{i-1/2}\boldsymbol{\cdot}\delta_{x}w_{i-1/2})-\dfrac{h^{2}}{12}h\sum_{i=1}^{M-1}\delta_{x}^{2}v_{i}\boldsymbol{\cdot}\delta_{x}^{2}w_{i},
\end{equation*}
and
\begin{equation*}
\norm{v}_{A} = \sqrt{<v,v>}_{A}.
\end{equation*}
They further go on to show that, by Lemma 4.3 of \cite{Zhang2014},
\begin{equation*}
-h\sum_{i=1}^{M-1}(\mathcal{H}_{h}v_{i})\boldsymbol{\cdot}\delta_{x}^{2}w_{i} = <v,w>_{A},
\end{equation*}
which establishes that $\norm{\boldsymbol{\cdot}}_{A}$ and $\norm{\delta_{x}\boldsymbol{\cdot}}_{h}$ are equivalent. 
\subsection{Consistency, Stability, and Convergence Results}
With the preliminaries established in section 2, we will present the main theorems of this paper. We begin with deriving the consistency of the schemes \eqref{S1} and \eqref{S2} and then the stability for each. With both these proofs, we are able to assert the convergence of each scheme, which is then demonstrated in the next section.
\begin{theorem}
Let $\{u_{i}^{n}| 0\leq i \leq M, 1 \leq n \leq N\}$ be the solution of the approximate scheme \eqref{S1}, with a uniform grid used in the spatial domain. Further, let $\phi,f(\boldsymbol{\cdot},t),f_{t}(\boldsymbol{\cdot},t),f_{tt}(\boldsymbol{\cdot},t) \in D(\mathcal{L}^{9/2})$ for each $t \in (0,T]$. Then, u is a unique solution to \eqref{eq:VE}, with resulting approximation error
\begin{equation}
\norm{u(x_{i},t_{j})-u_{i}^{n}}_A \leq \dfrac{T^{\alpha}}{\Gamma(\alpha+1)}\left(\dfrac{h^4}{180}\norm{\dfrac{\partial ^{6}u}{\partial x^{6}}}_{\infty}+\left(\dfrac{\tau_n+\tau_{max}}{2}\right) \norm{\dfrac{\partial u}{\partial t}}_{\infty}\right).
\end{equation} 
\end{theorem}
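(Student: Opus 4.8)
The plan is to derive a consistency identity for the exact solution, subtract the scheme \eqref{Approximate eq}, and close the argument with an energy estimate in the $\norm{\boldsymbol{\cdot}}_A$ norm. Before any of that, I would dispatch the uniqueness assertion: since $9/2>5/2$, the hypotheses here imply those of Theorem A, so the closed-form representation $u=S(t)\phi+(S*f)$ from Section 2.1 already furnishes the unique solution of \eqref{eq:VE}. The stronger requirement $\phi,f,f_t,f_{tt}\in D(\mathcal{L}^{9/2})$ is there to upgrade the spatial bounds of Theorem A from $k\le 4$ to $k\le 6$, so that $u(\boldsymbol{\cdot},t)\in C^6$ and Lemma 4.1 of [1] applies, while the temporal bounds $|d^l u/dt^l|\le C(1+t^{\alpha-l})$, $l=0,1,2$, give $u(x_i,\boldsymbol{\cdot})\in C^2[0,T]$ as needed for Lemma 2.2.

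First I would build the truncation identity. Starting from \eqref{DifferenceEQ}, I would apply Lemma 4.1 of [1] slicewise in the time variable to rewrite $H_h\,\partial_x^2 u(x_i,s)$ as $\delta_x^2 u(x_i,s)/h^2$ plus a remainder bounded in modulus by $\tfrac{h^4}{180}\norm{\partial^6 u/\partial x^6}_\infty$ (the factor $2$ from the two endpoints turning the $360$ of Lemma 4.1 into $180$). Because this spatial remainder sits inside the convolution $\int_0^{t_n}a_{1-\alpha}(t_n-s)(\boldsymbol{\cdot})\,ds$, integrating it against the kernel already produces the prefactor $\int_0^{t_n}a_{1-\alpha}(t_n-s)\,ds=t_n^\alpha/\Gamma(\alpha+1)\le T^\alpha/\Gamma(\alpha+1)$. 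Next I would apply Lemma 2.2 to the time integral with $g(s)=\partial_x^2 u(x_i,s)$ (keeping $f_{1-\alpha}$ exact), which both generates the trapezoidal coefficients — reindexing reproduces the weights $\tfrac{a_k^n+a_{k+1}^n}{2}$ of \eqref{Approximate eq} — and leaves the remainder $R_t^n$ controlled by $\big(\tfrac{\tau_n+\tau_{max}}{2\alpha}\big)\tau_n^\alpha\max_t|g'|$. Subtracting \eqref{Approximate eq} then yields the error equation
\[
H_h e_i^n=\sum_{k=1}^{n}\frac{a_k^n+a_{k+1}^n}{2}\,\frac{\delta_x^2 e_i^k+\delta_x^2 e_i^{k-1}}{2h^2}+\mathcal{E}_i^n,\qquad e_i^n:=u(x_i,t_n)-u_i^n,
\]
with $\mathcal{E}^n$ collecting the kernel-integrated spatial remainder and the temporal remainder $R_t^n$.

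The decisive step is the energy estimate. I would pair this error equation with $-\delta_x^2 e^n$ and sum in $i$, invoking the identity $-h\sum_i (H_h v_i)\delta_x^2 w_i=\inp{v}{w}_A$ from Section 2.3 so the left-hand side becomes $\norm{e^n}_A^2$. Isolating the implicit $k=n$ term, I would use the positivity of $-\delta_x^2$ and, crucially, the positivity and monotone decay of the weights $a_k^n$ — inherited from $a_{1-\alpha}$ being positive and decreasing (Section 2.1), with $\sum_{k=1}^n a_k^n$ telescoping to $t_n^\alpha/\Gamma(\alpha+1)$ — to show the convolution history is dominated by the current level and contributes no growing Grönwall factor. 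Bounding the remainder term $-h\sum_i \mathcal{E}_i^n\delta_x^2 e_i^n$ by Cauchy–Schwarz and using the equivalence of $\norm{\boldsymbol{\cdot}}_A$ with $\norm{\delta_x\boldsymbol{\cdot}}_h$ together with the Poincaré-type inequality $\norm{v}_h\le \tfrac{1}{\sqrt6}\norm{\delta_x v}_h$ (Lemma 4.2 of [1]), I expect one power of $\norm{e^n}_A$ to cancel, leaving $\norm{e^n}_A$ bounded by $T^\alpha/\Gamma(\alpha+1)$ times the sum of the two local truncation quantities — precisely the stated inequality.

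The hard part will be this last estimate: arranging signs so that the implicit second-difference term is coercive while the weighted history is absorbed, so that the self-adjoint, positive-definite structure of $\inp{\boldsymbol{\cdot}}{\boldsymbol{\cdot}}_A$ delivers a bound with no error accumulation. This is exactly where positivity and monotonicity of the kernel weights are indispensable. One point of bookkeeping I would check carefully is the time-derivative norm in the temporal term: Lemma 2.2 is applied with $g=\partial_x^2 u$, so its remainder naturally carries $\max_t|\partial_t\partial_x^2 u|$, and I would need to confirm that the summation by parts built into the $\norm{\boldsymbol{\cdot}}_A$ pairing reduces this to the $\norm{\partial u/\partial t}_\infty$ written in the statement (or else correct the derivative that appears).
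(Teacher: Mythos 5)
Your proposal is sound, and its first half coincides with the paper's own argument: uniqueness is dispatched via Theorem A exactly as you say, and the truncation error is obtained from the same two ingredients --- Lemma 4.1 of [1] for the $\tfrac{h^4}{180}\norm{\partial^6 u/\partial x^6}_\infty$ spatial remainder and Lemma 2.2 for the temporal one, with the kernel integral $\int_0^{t_n}a_{1-\alpha}(t_n-s)\,ds\le T^\alpha/\Gamma(\alpha+1)$ supplying the prefactor. Where you genuinely diverge is the closing step. The paper runs no energy estimate inside this proof: it identifies $u(x_i,t_n)-u_i^n$ outright with the kernel-integrated residual $\tfrac{1}{\Gamma(\alpha)}\int_0^{t_n}\left((R_x)^n+(R_t)^n\right)(t_n-s)^{\alpha-1}\,ds$ and takes the $A$-norm of that expression, tacitly discarding the implicit $\delta_x^2$-of-the-error terms that your error equation retains; the machinery you describe (pairing with $-\delta_x^2 e^n$, the identity $-h\sum_i(H_h v_i)\delta_x^2 w_i=\inp{v}{w}_A$, positivity and monotonicity of the weights) is deferred in the paper to Theorem 3.2 and to the error-equation discussion preceding Theorem 3.3, where a bound of this shape is actually derived with the constant $\sqrt{T^\alpha/\Gamma(\alpha+1)}$. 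Your route is therefore the more self-contained and more careful one --- it accounts for error propagation through the implicit history sum, which the paper's direct identification does not --- at the price that the constant you would obtain is the square root (or some multiple) of the one displayed in the theorem rather than exactly $T^\alpha/\Gamma(\alpha+1)$. Your final bookkeeping worry is also well placed: Lemma 2.2 is applied with $g=\partial_x^2 u+f$, so its remainder naturally carries $\max_t\left|\partial_t\partial_x^2 u+\partial_t f\right|$, and the paper silently writes $\norm{\partial u/\partial t}_\infty$ in its place without the justification you propose to supply.
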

\begin{proof}
By Theorem A, there exists a unique solution to \eqref{eq:VE}. Denote the residual of the approximation $(R_{x})^{n}(x_{i},t_n)$ by 
\begin{equation} \label{Residual}
(R_{x})^{n}(x_{i},t_n) = \dfrac{h^{4}}{360}\int_0^1 \left[\dfrac{\partial^6 u}{\partial x^6}(x_{i}-sh,t_n)+\dfrac{\partial^6 u}{\partial x^6}(x_{i}+sh,t_n)\right]ds
\end{equation}
for all $t\in [0,1]$. We may then bound $(R_{x})^{n}(x_{i},t_n)$ by
\begin{align*}
\bigg|(R_{x})^{n}(x_{i},t_{n}) \bigg|&= \bigg|\dfrac{h^{4}}{360}\int_0^1 \left(\dfrac{\partial^{6}u}{\partial x^{6}}(x_{i}-sh,t_n)+\dfrac{\partial ^{6}u}{\partial x^{6}}(x_{i}+sh,t_n)\right)\,ds \bigg|\\
\bigg|(R_{x})^{n}(x_{i},t_{j})\bigg|&\leq \dfrac{h^{4}}{360}\int_0^1 \left( \norm{\dfrac{\partial^{6}u}{\partial x^{6}}}_{\infty}+\norm{\dfrac{\partial ^{6}u}{\partial x^{6}}}_{\infty}\right)\,ds\\
&= \dfrac{h^{4}}{180}\norm{\dfrac{\partial ^{6}u}{\partial x^{6}}}_{\infty}.
\end{align*}
By applying Lemma 4.1 of \cite{Zhang2014} to \eqref{DifferenceEQ}, we see that 
\begin{align*}
 &\mathcal{H}_{h}u(x_{i},t_{n}) = \mathcal{H}_{h}u(x_{i},0) + \dfrac{1}{\Gamma(\alpha)}\int_0^{t_{n}} (t_{j}-s)^{\alpha -1}(\mathcal{H}_{h}u_{xx}(x_{i},s) +  \mathcal{H}_{h}f(x_{i},s))\,ds\\
 &= \dfrac{1}{\Gamma(\alpha)}\int_0^{t_{n}} \bigg(\dfrac{u(x_{i+1},s)-2u(x_{i},s)+u(x_{i-1},s)+h^{2}(R_{x})^{n}(x_{i},s)}{(t_{j}-s)^{1-\alpha}h^{2}}\bigg)\,ds\\ 
 &+ \mathcal{H}_{h}f_{1-\alpha}(x_{i},t_{j})).
\end{align*}
Further, by applying Theorem 3.1 to \eqref{DifferenceEQ}, we have
\begin{align*}
\mathcal{H}_{h}u_i^n &= \mathcal{H}_{h}\phi(x_i)+\mathcal{H}_h f_{1-\alpha}(x_i,t_n) \nonumber \\ 
&+ \sum_{k=1}^{n}\left(\frac{a_k^n}{2h^2}\left[u^k_{i+1}-2u^k_i+u^k_{i-1}\right]+\frac{a_{k}^n}{2h^2}\left[u^{k-1}_{i+1}-2u^{k-1}_i+u^{k-1}_{i-1}\right]\right)\\
&+\dfrac{1}{\Gamma(\alpha)}\int_0^{t_{n}}\left((R_{x})^{n}(x_{i},s)+(R_t)^n(x_{i},s)\right)(t_{n}-s)^{\alpha-1}\,ds.
\end{align*}
Finally, we see that the approximation error is 
\begin{align*}
\norm{u(x_{i},t_{n})-u_i^n}_A &= \norm{\dfrac{1}{\Gamma(\alpha)}\int_0^{t_{n}}\left((R_{x})^{n}(x_{i},s)+(R_t)^n(x_{i},s)\right)(t_{n}-s)^{\alpha-1}\,ds}_A\\
&\leq \dfrac{T^\alpha}{\Gamma(\alpha+1)}(\norm{(R_{x})^{n}}_A+\norm{(R_t)^n}_A)\\
&\leq \dfrac{T^{\alpha}}{\Gamma(\alpha+1)}\left(\dfrac{h^4}{180}\norm{\dfrac{\partial ^{6}u}{\partial x^{6}}}_{\infty}+\left(\dfrac{\tau_n+\tau_{max}}{2}\right) \norm{\dfrac{\partial u}{\partial t}}_{\infty}\right).
\end{align*}
\end{proof}

We will remark that as $\alpha \rightarrow 0$ then $\dfrac{T^{\alpha}}{\Gamma(1+\alpha)} \rightarrow 1$. Also, as $\alpha \rightarrow 1$ then $\dfrac{T^{\alpha}}{\Gamma(1+\alpha)} \rightarrow T$. The following corollary is immediate from the previous theorem. 
\begin{corollary}
Under a uniform partition of the time domain where $\tau_n = \tau$ for all $n$, then the approximation error of \eqref{S1} is $O(h^4+\tau)$. 
\end{corollary}
We also have a theorem asserting the stability of the discrete scheme and derives the corresponding error equations of the scheme:
\begin{theorem}
Suppose $\{u_i^n \vert 0\leq i\leq M, 1\leq n \leq N\}$ is the solution of the difference scheme \eqref{S1}. Then, for any size temporal mesh described before, the discrete difference scheme \eqref{S1} is unconditionally stable to f and $\phi$, where
\begin{equation*}
\norm{u^{n}}_{A}^{2} \leq \norm{\phi}_{A}^{2}+\frac{T^{\alpha}}{\Gamma(\alpha+1)}\max_{1\leq l\leq N}\norm{\mathcal{H}_{h}f^l}_{h}^{2}
\end{equation*} 
\end{theorem}
\begin{proof} 
Recall that
\begin{align*}
a^{n}_{k} &= \frac{1}{\Gamma(\alpha)} \int_{t_{k-1}}^{t_{k}}(t_{n}-s)^{\alpha-1}ds,\\
&= \frac{1}{\Gamma(\alpha +1)}\left[(t_n-t_{k-1})^{\alpha}- (t_n-t_k)^{\alpha} \right].
\end{align*}
We consider the scheme \eqref{S1} after combining the initial and boundary conditions. By omitting the residual term $R_i^n$ and by substituting the exact solution $U^k_i$ with its approximation $u^k_i$ into \eqref{S1}, we have:
\begin{equation*}
\mathcal{H}_h u^n_i = \mathcal{H}_h u^0_i + \sum_{k=1}^{n} a^n_k\left( \frac{\delta_x^2 u^k_i+\delta_x^2 u^{k-1}_i}{2}+ \mathcal{H}_h  f^n_i\right) .
\end{equation*}
By multiplying both sides by $-2h\delta_x^2u^n_i$ and summing over each i, then
\begin{align*}
2\norm{u^n}_A^2 + \sum_{k=1}^{n-1} a^n_k&\left(\norm{\delta_x^2 u^k}_h^2 + \norm{\delta_x^2 u^{k-1}}_h^2 \right) = 2<u^0, u^n>_A - 2 \sum_{k=1}^{n}a^n_k<\mathcal{H}_h f, \delta_x^2 u^n>_h\\
&\leq \left( \norm{u^0}_A^2+\norm{u^n}_A^2\right)+\sum_{k=1}^{n}a^n_k\left(\norm{\mathcal{H}_h f^n}_h^2 +\norm{\delta_x^2 u^n}_h\right)\\
\Rightarrow \norm{u^n}_A^2 &\leq \norm{\phi}^2_A + \sum_{k=1}^{n} a^n_k \max_{1\leq l \leq N}\norm{\mathcal{H}_h f^l}_h^2 \ \ \  1\leq n\leq N.
\end{align*}
Finally, since $ \sum_{k=1}^{n}a^n_k = \frac{T^{\alpha}}{\Gamma(\alpha+1)}$ when $n=N$, we see the result holds.
\end{proof}
To further see the convergence of the numerical scheme, denote $\epsilon^n_i:= u(x_i,t_n)-u_i^n$. The error equations are then obtained:
\begin{equation} \label{Erroreq}
\mathcal{H}_h \epsilon ^n_i = \sum_{k=1}^n a^n_k \delta_x^2 \epsilon ^n_i + a^n_n R^n_i 
\end{equation}
\begin{equation*}
\epsilon ^n_0 = \epsilon ^n_M = 0 \text{,      } 1\leq n\leq N
\end{equation*}
\begin{equation*}
\epsilon ^0_i =0 \text{,      } 0\leq i\leq M.
\end{equation*}
By applying \eqref{Residual} and by applying the previous stability analysis, we have the immediate error convergence result
\begin{align*}
\norm{\epsilon ^n}_A^2 &\leq \norm{\epsilon ^0}_A^2 + \frac{T^\alpha}{\Gamma(\alpha+1)}\norm{R^n_i}_h^2\\
&\leq \frac{T^\alpha}{\Gamma(\alpha+1)}\left( \dfrac{h^4}{180}\norm{\dfrac{\partial ^{6}u}{\partial x^{6}}}_{\infty}+ \left( \dfrac{\tau_n+\tau_{max}}{2} \right)  \norm{\dfrac{\partial u}{\partial t}}_{\infty}\right)^2,\\
\norm{\epsilon ^n}_A &\leq \sqrt{\dfrac{T^{\alpha}}{\Gamma(\alpha+1)}}\left( \dfrac{h^4}{180}\norm{\dfrac{\partial ^{6}u}{\partial x^{6}}}_{\infty}+ \left( \dfrac{\tau_n+\tau_{max}}{2} \right) \norm{\dfrac{\partial u}{\partial t}}_{\infty}\right).
\end{align*}
That is, the scheme \eqref{S1} is both stable and consistent, hence it is convergent, see [7-9] for further details. Therefore, by [3, theorem 2.1] we have the following immediate results:
\begin{theorem}
Let $\{u_{i}^{n}| 0\leq i \leq M, 1 \leq n \leq N\}$ be the solution of the approximate scheme \eqref{S1}, with a uniform grid used in the spatial domain and any grid spacing used in the temporal direction. Further, let $\phi,f(\boldsymbol{\cdot},t),f_{t}(\boldsymbol{\cdot},t),f_{tt}(\boldsymbol{\cdot},t) \in D(\mathcal{L}^{9/2})$ for each $t \in (0,T]$. Then, it holds for some $C>0$
\begin{equation}
\norm{u(x_i,t_n)-u_i^n}_A \leq \sqrt{\dfrac{T^{\alpha}}{\Gamma(\alpha+1)}}C\left(h^4+\tau_{max} \right), \ \ \ \ 1\leq n \leq N.
\end{equation}
\end{theorem}
We also have a corollary detailing the use of a truncation of the exact solution to generate the data at $u(x,t_1)$.
\begin{corollary}\label{truncation}
Let $u_{h,1}(x,t_1) = \phi(x) + \dfrac{\phi''(x)t_1^{\alpha}}{\Gamma(\alpha+1)}+ (f*a_{1-\alpha})(x,t_1)$. Then, the truncation error 
$$\norm{e_{h,1}}_{\infty} = \norm{u(x,t_1)-u_{h,1}(x,t_1)}_{\infty} \leq C\tau_{max}^{2\alpha}\left(\norm{\phi^{(4)}(x)}_{\infty}+\norm{f(x,t_1)}_{\infty} \right)$$
\end{corollary}
\begin{proof}
Consider the exact solution $u(x,t)$ of \eqref{eq:VE} which is generated from \eqref{unique_soln}.  That is, 
\begin{align*}u(x,t) &= \phi(x) + \dfrac{\phi''(x)t^{\alpha}}{\Gamma(\alpha+1)}+\dfrac{\phi^{(4)}(x)t^{2\alpha}\Gamma(1/2)}{4^{\alpha}\Gamma(\alpha+1/2)}+O(t^{3\alpha}\phi^{(6)}(x))+(f*a_{1-\alpha})(x,t) \\
&+ ((f*a_{1-\alpha})*a_{1-\alpha})(x,t) + O(t^{3\alpha}).
\end{align*}
Hence, the truncation error at the first time step $t_1$ is, after ignoring the higher order terms, 
\begin{align*}
\norm{e_{h,1}}_{\infty} &= \norm{\dfrac{\phi^{(4)}(x)t_1^{2\alpha}\Gamma(1/2)}{4^{\alpha}\Gamma(\alpha+1/2)}+((f*a_{1-\alpha})*a_{1-\alpha})(x,t_1)}_{\infty}\\
&\leq Ct_1^{2\alpha}\norm{\phi(x)}_{\infty}+\norm{\int_0^{t_1}\dfrac{(t_1-s)^{\alpha-1}}{\Gamma(\alpha)}\left(\int_0^s \dfrac{(s-v)^{\alpha-1}f(x,v)}{\Gamma(\alpha)} \,dv\right)\, ds}\\
&\leq Ct_1^{2\alpha}\norm{\phi(x)}_{\infty} + \norm{f(x,t_1)}_{\infty}\norm{\int_0^{t_1}\dfrac{s^{\alpha}(t_1-s)^{\alpha-1}}{\Gamma(\alpha)}}_{\infty}\\
&= Ct_1^{2\alpha}\norm{\phi(x)}_{\infty} + \norm{f(x,t_1)}_{\infty}\norm{\dfrac{\Gamma(1/2)t_1^{2\alpha}}{4^{\alpha}\Gamma(\alpha+1/2)}}_{\infty}\\
&\leq C t_1^{2\alpha}\left(\norm{\phi(x)}_{\infty}+\norm{f(x,t_1)}_{\infty}\right).
\end{align*}
\end{proof}
\begin{remark*}
By letting $t_1 = \tau,\, \phi(x) = 0,\,$ and where $f(x,\tau) = (\tau+O(\tau^{1+\alpha}))X(x)$, we have the truncation error in \cref{truncation} after neglecting the terms of order $O(\tau^{1+\alpha})$ :
$$\norm{e_{h,1}}_{\infty} = \norm{u(x,t_1)-u_{h,1}(x,t_1)}_{\infty} \leq C\tau^{1+2\alpha}\norm{X(x)}_{\infty}. $$
\end{remark*}
We have a similar set of results for the numerical scheme \eqref{S2} for functions $g(t) \in C^2[0,T]$. Beginning with the consistency results, we will provide each theorem as follows:
%%%%%%%%%%%%%%%%%%%%%%%%%%%%%%%%%%%%%%%%%%
\begin{theorem}
Let $\{u_{i}^{n}| 0\leq i \leq M, 1 \leq n \leq N\}$ be the solution of the approximate scheme \eqref{S2}, with a uniform grid used in the spatial domain. Further, let $\phi,f(\boldsymbol{\cdot},t),f_{t}(\boldsymbol{\cdot},t),f_{tt}(\boldsymbol{\cdot},t) \in D(\mathcal{L}^{9/2})$ for each $t \in (0,T]$. Then, u is a unique solution to \eqref{eq:VE}, with resulting approximation error
\begin{equation}
\norm{u(x_{i},t_{j})-u_{i}^{n}}_A \leq \dfrac{T^{\alpha}}{\Gamma(\alpha+1)}\left(\dfrac{h^4}{180}\norm{\dfrac{\partial ^{6}u}{\partial x^{6}}}_{\infty}+\left(\dfrac{\tau_n^2+\tau_{max}^2}{8}\right) \norm{\dfrac{\partial^2 u}{\partial t^2}}_{\infty}\right).
\end{equation}
\begin{proof} 
The proof is identical to Theorem 4.1 and is omitted. 
\end{proof}
\end{theorem}
Therefore, under a uniform partition of the time domain, the approximation error of \eqref{S2} is $O(h^4+\tau^2)$. 
\begin{theorem}
Suppose $\{u_i^n \vert 0\leq i\leq M, 1\leq n \leq N\}$ is the solution of the difference scheme \eqref{S2}. Then, for any size temporal mesh described before, the discrete difference scheme \eqref{S2} is unconditionally stable to f and $\phi$, where
\begin{equation*}
\norm{u^{n}}_{A}^{2} \leq \norm{\phi}_{A}^{2}+\frac{T^{\alpha}}{\Gamma(\alpha+1)}\max_{1\leq l\leq N}\norm{\mathcal{H}_{h}f^l}_{h}^{2}
\end{equation*} 
\end{theorem}
\begin{proof}
The proof is identical to Theorem 4.3 and is omitted. 
\end{proof}
Following our stability result, the error equations are then obtained:
\begin{equation} \label{Erroreq}
\mathcal{H}_h \epsilon ^n_i =\sum_{k=1}^n a^n_k \delta_x^2 \epsilon ^n_i + R^n_i
\end{equation}
\begin{equation*}
\epsilon ^n_0 = \epsilon ^n_M = 0 \text{,      } 1\leq n\leq N
\end{equation*}
\begin{equation*}
\epsilon ^0_i =0 \text{,      } 0\leq i\leq M.
\end{equation*}
In a similar manner to the convergence result for the scheme \eqref{S1}, we also have an error convergence result for \eqref{S2}
\begin{align*}
\norm{\epsilon ^n}_A^2 &\leq \norm{\epsilon ^0}_A^2 + \frac{T^\alpha}{\Gamma(\alpha+1)}\norm{R^n_i}_h^2\\
&\leq \frac{T^\alpha}{\Gamma(\alpha+1)}\left( \dfrac{h^4}{180}\norm{\dfrac{\partial ^{6}u}{\partial x^{6}}}_{\infty}+ \left( \dfrac{\tau_n^2+\tau_{max}^2}{8} \right)  \norm{\dfrac{\partial^2 u}{\partial t^2}}_{\infty}\right)^2\\
\norm{\epsilon ^n}_A &\leq \sqrt{\dfrac{T^{\alpha}}{\Gamma(\alpha+1)}}\left( \dfrac{h^4}{180}\norm{\dfrac{\partial ^{6}u}{\partial x^{6}}}_{\infty}+ \left( \dfrac{\tau_n^2+\tau_{max}^2}{8} \right) \norm{\dfrac{\partial^2 u}{\partial t^2}}_{\infty}\right).
\end{align*}
Hence, the schemes \eqref{S1} and \eqref{S2} are both stable and consistent, hence they are both convergent. Therefore, by [3, theorem 2.1] we have the following immediate results:
\begin{theorem}
Let $\{u_{i}^{n}| 0\leq i \leq M, 1 \leq n \leq N\}$ be the solution of the approximate scheme \eqref{S2}, with a uniform grid used in the spatial domain and any grid spacing used in the temporal direction. Further, let $\phi,f(\boldsymbol{\cdot},t),f_{t}(\boldsymbol{\cdot},t),f_{tt}(\boldsymbol{\cdot},t) \in D(\mathcal{L}^{9/2})$ for each $t \in (0,T]$. Then, it holds for some $C>0$
\begin{equation}
\norm{u(x_i,t_n)-u_i^n}_A \leq \sqrt{\dfrac{T^{\alpha}}{\Gamma(\alpha+1)}}C\left(h^4+\tau_{max}^2 \right), \ \ \ \ 1\leq n \leq N.
\end{equation}
\end{theorem}
These results imply that under the same regularity assumptions in \cite{Zhang2014}, we may improve our order of convergence by a factor of $\alpha$. Further, we may also relax these regularity assumptions to have $g(t) \in C^1[0,T]$ while preserving an order of convergence of $O(k)$ in the time variable, which is not possible with the L1-method. In the next section we shall consider a simple numerical experiment that illustrates our theoretical results. 
\section{Numerical Experiment}
We will consider the following test problem for our numerical experiments:
\begin{align*}
u(x,t)&=\sin(\pi x)t^2 \text{, } u(0,t)=u(1,t)=0 \text{, } \phi = u(x,0) = 0 \text{, }\\
f_{1-\alpha}(x,t)&= \sin(\pi x)\left[t^2 + \dfrac{2\pi^2t^{\alpha+2}}{\Gamma(\alpha+3)}\right] = a_{1-\alpha}(t)*f(x,t),
\end{align*}
which will satisfy $u(x,t) \in C^{1}[0,T], C^{2}[0,T]$ in time. We will define $M$ to be the number of partitions of the spatial domain, $E_{2}(M,N)$ to be the maximum error attained over the total mesh for a uniform mesh for functions in $C^2[0,T]$, and $\text{rate}_2 = log_{2}\Big( \dfrac{E_{2}(M,N/2)}{E_{2}(M,N)}\Big)$.  Therefore, for $M=25$ and $T=1$, we have the following:
\\
\\
\renewcommand{\arraystretch}{0.35}
\begin{tabular}{ |p{3cm}||p{3cm}|p{3cm}|p{3cm}|  }
 \hline
 \multicolumn{4}{|c|}{Numerical Error for $u(x,t)=\sin(\pi x)t^2$, T=1 on a Uniform mesh} \\
 \hline
 $\alpha$ & N & $E_{1}(M,N)$ & $\text{rate}_{1}$\\
 \hline
  $0.05$   & 10    &$0.0786$&  *\\
 &20 &$0.0370$ &$1.089$ \\
 &40 &$0.0179$ &$1.046$ \\
 &80 &$0.0087$ &$1.046$ \\
 &160 &$0.0042$	&$1.0477$ \\
 \hline
 $0.25$   & 10    &$0.0370$&  *\\
 &20 &$0.0157$ &$1.2388$ \\
 &40 &$0.0067$ &$1.222$ \\
 &80 &$0.0029$ &$1.2329$ \\
 &160 &$0.0012$	&$1.2398$ \\
 \hline
 $0.5$   & 10    &$0.0122$&  *\\
 &20 &$0.0046$ &$1.4884$ \\
 &40 &$0.0017$ &$1.4294$ \\
 &80 &$6.276$e--4 &$1.4517$ \\
 &160 &$2.2705$e--4	&$1.4668$ \\
 \hline
 $0.75$   & 10    &$0.0078$&  *\\
 &20 &$0.0014$ &$2.4316$ \\
 &40 &$2.7567$e--4 &$2.3859$ \\
 &80 &$9.1448$e--5 &$1.5919$ \\
 &160 &$2.9572$e--5	&$1.6287$ \\

 \hline
 $0.95$   & 10    &$0.0046$&  *\\
 &20 &$7.76962$e--4 &$2.6739$ \\
 &40 &$1.1287$e--4 &$2.7832$ \\
 &80 &$1.5854$e--5 &$2.8317$ \\
 &160 &$3.7211$e--6	&$2.0911$ \\

 \hline
\end{tabular}\\
\\
The results for using the $C^2[0,T]$ scheme for the same test problem is as follows:
\\
\renewcommand{\arraystretch}{0.35}
\begin{tabular}{ |p{3cm}||p{3cm}|p{3cm}|p{3cm}|  }
 \hline
 \multicolumn{4}{|c|}{Numerical Error for $u(x,t)=\sin(\pi x)t^2$, using $C^2[0,T]$ scheme} \\
 \hline
 $\alpha$ & N & $E_{2}(M,N)$ & $\text{rate}_{2}$\\
 \hline
  $0.05$   & 10    &$0.0003$&  *\\
 &20 &$9.5503$e--5 &$1.8065$ \\
 &40 &$2.5878$e--5 &$1.8838$ \\
 &80 &$6.5375$e--6 &$1.9849$ \\
 &160 &$1.6291$e--6	&$2.0047$ \\
 \hline
 $0.25$   & 10    &$0.0011$&  *\\
 &20 &$0.0003$ &$1.8854$ \\
 &40 &$7.8046$e--5 &$1.9375$ \\
 &80 &$1.9597$e--5 &$1.9937$ \\
 &160 &$4.8246$e--6	&$2.0222$ \\
 \hline
 $0.5$   & 10    &$0.0014$&  *\\
 &20 &$0.0004$ &$1.9483$ \\
 &40 &$9.4512$e--5 &$1.9847$ \\
 &80 &$2.3281$e--5 &$2.0214$ \\
 &160 &$5.6626$e--6	&$2.0396$ \\
 \hline
 $0.75$   & 10    &$0.0016$&  *\\
 &20 &$0.0004$ &$1.9801$ \\
 &40 &$9.9404$e--5 &$2.0014$ \\
 &80 &$2.4403$e--5 &$2.0262$ \\
 &160 &$5.9182$e--6	&$2.0439$ \\

 \hline
 $0.95$   & 10    &$0.0016$&  *\\
 &20 &$0.0004$e--4 &$1.9974$ \\
 &40 &$0.0001$e--5 &$2.0067$ \\
 &80 &$2.5303$e--5 &$2.0165$ \\
 &160 &$6.1781$e--6	&$2.0341$ \\

 \hline
\end{tabular}\\
\\
The above table show that for various values of $\alpha$, the error estimate improves with an increase in the number of space and time steps used in the mesh partitioning while preserving a rate of convergence of $O(h^4 + k^2)$ as expected. As a result, \iffalse for $\alpha > 0.5$,\fi our method exhibits a better rate of convergence overall, under the same regularity assumptions. \iffalse whereas when $\alpha \leq 0.5$, our method has a smaller rate of convergence.\fi By \cref{truncation}, if we instead replace $u(x_i,t_1)$ with its approximation derived from the exact solution, we instead have the following improved results for a small amount of time steps due to the truncation error. For this example, we have $u(x_i,t_1) = f_{1-\alpha}(x_i,t_1)$ These results are summarized in the following table:
\\
\\
\renewcommand{\arraystretch}{0.35}
\begin{tabular}{ |p{3cm}||p{3cm}|p{3cm}|p{3cm}|  }
 \hline
 \multicolumn{4}{|c|}{Numerical Error for $u(x,t)=\sin(\pi x)t^2$ with truncation error for $u(x,t_1)$} \\
 \hline
 $\alpha$ & N & $E_{1}(M,N)$ & $\text{rate}_{1}$\\
 \hline
  $0.05$   & 10    &$0.0840$&  *\\
 &20 &$0.0370$ &$1.1821$ \\
 &40 &$0.0179$ &$1.046$ \\
 \hline
 $0.25$   & 10    &$0.0435$&  *\\
 &20 &$0.0157$ &$1.4744$ \\
 &40 &$0.0067$ &$1.222$ \\
 \hline
 $0.5$   & 10    &$0.0189$&  *\\
 &20 &$0.0046$ &$2.0227$ \\
 &40 &$0.0017$ &$1.4294$ \\
 \hline
 $0.75$   & 10    &$0.0079$&  *\\
 &20 &$0.0014$ &$2.4636$ \\
 &40 &$2.7567$e--4 &$2.3859$ \\
 \hline
 $0.95$   & 10    &$0.0050$&  *\\
 &20 &$7.76962$e--4 &$2.6739$ \\
 &40 &$1.1287$e--4 &$2.7832$ \\
 \hline
\end{tabular}\\
\\
\section{Conclusion}
We have shown that using the Laplace transform on the Caputo fractional derivative can preserve the maximum accuracy of an estimate, with some improvements depending on the value of $\alpha$. By a Taylor Series expansion to approximate the convolution integral, we may assert that one can design a scheme that more accurately approximates this problem for certain values of $\alpha$ over the desired meshes\iffalse, namely where $\alpha > \dfrac{1}{2}$\fi. Further, utilizing this Taylor Series expansion argument, we are able to derive a scheme that only requires the function to be in $C^1(0,T]$ for the time variable, which allows for a wider class of functions. We also present a scheme for $C^2[0,T]$ functions that parallels the L1-method, as seen in [1,3,15,16], which has error of $O(k^2)$ in time. This novel result improves over previous results, which guarantee an error of $O(k^{2-\alpha})$ in time for the same regularity assumption. \iffalse  Future works will begin with seeking a scheme that is second order by means of Taylor Expansion, which should result in $O(k^{2})$ at the cost of restricting the function to be in $C^2(0,T]$. \fi 

\newpage

\appendix

\section{Existence and Uniqueness of a Solution to \eqref{eq:VE}}

Consider the Hilbert space $L^2(0,1)$ and let $\sigma(A)$ denote the spectrum of the operator $A=-\dfrac{\partial^2}{\partial x^2}$ which is a strictly positive self-adjoint operator on the dense subspace $H_0^2(0,1)$\iffalse , the space of all $X(x)$ and $X'$ absolutely continuous functions such that $X,X',X'' \in L^2(0,1)$ and $X,X',X''\vert_{\partial(0,1)}=0$\fi .
The operator valued equation $(\lambda I- A)(X)=0$ has the solution 
\begin{align*}
(\lambda I- A)(X) &= (\lambda X - A(X))=0\\
&= X''+\lambda X =0\\
&\Rightarrow X_{\lambda}(x) = \sin(\sqrt{\lambda} x) \text{, with Eigenvalues } \lambda_{n}=(n\pi)^2 .
\end{align*}
Now, let 
\begin{equation*}
\delta_A = \inf_{{y \neq 0} \text{, }{y \in H_0^2((0,1))}} \frac{(Ay,y)}{(y,y)}=\pi^2.
\end{equation*}
\indent It is easy to see that $\displaystyle{a_{1-\alpha}(t)=\frac{t^{\alpha-1}}{\Gamma(\alpha)}}$ is positive, decreasing on $(0,\infty)$, and   $a_{1-\alpha} \in C(0,\infty) \cap L^{1}(0,1)$. Therefore, we may apply Theorem 4.1 from \cite{Friedman1969} to see that the operator S(t) defined as 
\begin{equation*}
S(t)x_0=\int_{\delta_{A}}^{\infty} S_{\lambda}(t)dE_{\lambda}x_0\ \ \  (x_0 \in L^{2}(0,1)),
\end{equation*}
is the fundamental solution of \eqref{eq:VE}, as defined in \cite{Friedman1969}. Here, $S_{\lambda} = S_{\lambda}(t)$ is the solution of the scalar equation
\begin{equation} \label{Friedman_3.2}
S_{\lambda}(t) = 1-\lambda \int_s^t a_{1-\alpha}(t-\tau)S_{\lambda}(\tau)\,d\tau,
\end{equation}
and $E_{\lambda}$ is the resolution of the identity for $A$ and because the operator-valued function $S=S(t)$ is a fundamental solution, $S \in L^1\left((0,T]; \mathcal{B}\left(L^2(0,1)\right)\right)$, and for almost all $t\in [0,T]$ 
\begin{equation*}
S(t) = I-A\int_0^t a_{1-\alpha}(t-\tau)S(\tau)\,d\tau
\end{equation*}
where $\mathcal{B}\left(L^2(0,1)\right)$ is the space of all bounded linear operators of $L^2(0,1)$ and $I$ is the identity operator. If $\phi(x) = \sum\limits_{n=1}^{\infty}  a_n \sin(n\pi x)\in H^2_0(0,1)$ then $\displaystyle{\sum_{n=1}^{\infty}\vert a_n\vert^2 n^2 <\infty}$, and 
\begin{align*}
A\phi(x) &= \int_{\delta_A}^{\infty}\lambda dE_{\lambda}\phi(x)\\
&= \sum\limits_{n=1}^{\infty} \lambda_n a_n \sin(n\pi x)\\
&= \sum\limits_{n=1}^{\infty} (\pi n)^2 a_n \sin(n \pi x).
\end{align*}
Let $\hat{a}_{1-\alpha}(s) = \mathcal{L}(a_{1-\alpha}(t))$. Define $g(s) = s \hat{a}_{1-\alpha}(s) = s(s^{-\alpha}) = s^{1-\alpha}$. We may calculate $S_{\lambda}$ using the following from \cite{Friedman1969}:
\begin{align*}
S_{\lambda} &= \mathcal{L}^{-1}\bigg(\frac{1}{s+\lambda g(s)}\bigg) = \mathcal{L}^{-1}\bigg(\frac{1}{s+\lambda s^{1-\alpha}}\bigg)\\
&= \mathcal{L}^{-1}\bigg(\frac{s^{-1}}{1+\lambda s^{-\alpha}}\bigg) = E_{\beta}(-\lambda t^{\alpha}),
\end{align*}
where $E_{\beta}$ is the well known Mittag-Leffler function, $\displaystyle E_{\beta}(z) = \sum_{n=0}^{\infty} \frac{z^{n}}{\Gamma(1+n\beta)}$, see Theorem 6.1.1 of \cite{Brunner2004} for more details. Now, from the above calculations, 
\begin{align}\label{unique_soln}
S(t)\phi(x)=\int_{\delta_{a}}^{\infty} S_{\lambda}(t)dE_{\lambda}\phi(&x) = \sum\limits_{n=1}^{\infty} S_{\lambda n}(t) a_n \sin(n\pi x)\nonumber\\
&= \sum\limits_{n=1}^{\infty} \sum\limits_{m=0}^{\infty} \bigg[\frac{(-\lambda_{n} t^{\alpha})^{m}}{\Gamma(m \alpha +1)}\bigg]a_n \sin(n\pi x)\nonumber\\
&= \sum\limits_{n=1}^{\infty} a_n \sin(n\pi x) \sum\limits_{m=0}^{\infty} \bigg[\frac{(-\lambda_{n} t^{\alpha})^{m}}{\Gamma(m \alpha +1)}\bigg].
\end{align}
Then $$u(x,t) = S(t)\phi(x)+(S*f)(x,t)$$ is, in closed form, the unique solution of \eqref{eq:VE}, ensured by Theorem A.

\unappendix

%%%%%%%%%%%%%%%%%%%%%%%%%%%%%%%%%%%%%%%%%%%%%%

\end{document}